\newtheorem{thm}{Theorem}
\newtheorem{lem}[thm]{Lemma}
\def\\{\cr}
\def\({\left(}
\def\){\right)}
\def\[{\left[}
\def\]{\right]}
\def\<{\langle}
\def\>{\rangle}
\def\fl#1{\left\lfloor#1\right\rfloor}
\def\cF{\mathcal F}
\def\cE{\mathcal E}
\def\cG{\mathcal G}
\def\cH{\mathcal H}
\def\Z{\mathbb{Z}}
\def\notdivides{\mathrel{\kern-3pt\not\!\kern3.5pt\bigm|}}
\begin{document}

\title{On the Number of Eisenstein Polynomials
of Bounded Height}

\author{
{\sc Randell Heyman}\\
{Department of Computing, Macquarie University} \\
{Sydney, NSW 2109, Australia}\\
{\tt randell@unsw.edu.au}
\and
{\sc Igor E. Shparlinski}  \\
Department of Computing, Macquarie University \\
Sydney, NSW 2109, Australia\\
{\tt igor.shparlinski@mq.edu.au}
}

\date{ }
\maketitle

\begin{abstract} We obtain a more precise version
of an asymptotic formula of A.~Dubickas for the number
of monic Eisenstein polynomials of fixed degree $d$ and  of height
at most $H$, as $H\to \infty$. In particular, we give an explicit
bound for the error term. We also obtain an asymptotic formula
for arbitrary Eisenstein polynomials of height at most $H$.
\end{abstract}

\section{Introduction}
The Eisenstein criterion~\cite{Eis} is a simple well-known sufficient criterion to establish that an integer coefficient polynomial (and hence a polynomial with rational coefficients) is irreducible, see also~\cite{Cox}.
We recall that
\begin{equation}
\label{eq:poly}
f(X) = a_dX^d +a_{d-1}X^{d-1}+ \dots+a_1X+a_0 \in \Z[X]
\end{equation}
is called an {\it Eisenstein polynomial\/} if for some prime $p$ we have
\begin{enumerate}
\item[(i)] $p \mid a_i$ for $i=0, \ldots, d-1$,
\item[(ii)] $p^2\nmid a_0$,
\item[(iii)] $p \nmid a_d$.
\end{enumerate}

For integers $d \ge 2$ and $H \ge 1$, we let $\cE_d(H)$ be the set of all Eisenstein polynomials
with $a_d=1$ and of height at most $H$, that is, satisfying
$\max\{|a_{0}|,\ldots,|a_{d-1}|\}\leq H$.

Dubickas~\cite{Dub} 
has given  an asymptotic formula
for the cardinality $E_d(H) = \#\cE_d(H)$, see also~\cite{DoJo}.
Here we address this question again and obtain a more precise version
of this result with an explicit error term. Using techniques different to those in~\cite{Dub}, we also obtain an asymptotic formula for the number of polynomials, whether monic or non-monic, 
that satisfy the Eisenstein criterion.

\begin{thm}
\label{thm:main}
We have,
$$
E_d(H)=\vartheta_d 2^d H^d+\left\{\begin{array}{ll}
O\(H^{d-1}\),&\quad \text{if $d>2$}, \\
O(H(\log H)^2),&
\quad  \text{if $d=2$},
\end{array}\right.
$$
where
$$
\vartheta_d =  1-\prod_{p~\mathrm{prime}}
\(1- \frac{p-1}{p^{d+1}}\).
$$
\end{thm}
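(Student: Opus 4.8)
The plan is to compute $E_d(H)=\#\cE_d(H)$ by inclusion--exclusion over the primes witnessing Eisenstein's criterion. Since $a_d=1$, condition (iii) is automatic, so a coefficient vector $(a_0,\dots,a_{d-1})$ with $|a_i|\le H$ belongs to $\cE_d(H)$ exactly when, for some prime $p$, one has $p\mid a_i$ for $i=0,\dots,d-1$ and $p^2\nmid a_0$. For a squarefree integer $k\ge 2$ write $\cA_k(H)$ for the set of such vectors satisfying these congruences simultaneously for every prime divisor of $k$. Then $\cE_d(H)=\bigcup_{p~\mathrm{prime}}\cA_p(H)$, and this union is finite since $\cA_p(H)=\emptyset$ once $p>H$ (the only multiple of $p$ in $[-H,H]$ is $0$, which is divisible by $p^2$). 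Inclusion--exclusion gives
$$
E_d(H)=\sum_{k\ge 2,\ k\text{ squarefree}}(-1)^{\omega(k)+1}\,\#\cA_k(H),
$$
a sum in which only the (finitely many) terms with $k\le H$ are nonzero.

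Next I would obtain an exact formula for $\#\cA_k(H)$. For each $i\ge 1$ the admissible $a_i$ are the $2\fl{H/k}+1$ multiples of $k$ in $[-H,H]$; writing $a_0=kb$, the conditions on $a_0$ become $|b|\le H/k$ and $\gcd(b,k)=1$, of which there are $\psi_k(\fl{H/k})$, where $\psi_k(M)=\sum_{e\mid k}\mu(e)(2\fl{M/e}+1)$ counts integers in $[-M,M]$ coprime to $k$. Hence
$$
\#\cA_k(H)=\psi_k\(\fl{H/k}\)\(2\fl{H/k}+1\)^{d-1}.
$$
Discarding the floor functions gives $2\fl{H/k}+1=2H/k+O(1)$ and $\psi_k(\fl{H/k})=2H\varphi(k)/k^{2}+O(2^{\omega(k)})$, the error in the latter being at most the number $2^{\omega(k)}$ of squarefree divisors of $k$; expanding $(2\fl{H/k}+1)^{d-1}$ and using $k\le H$ to absorb the cross terms, one arrives at
$$
\#\cA_k(H)=2^dH^d\,\frac{\varphi(k)}{k^{d+1}}+O\(\frac{2^{\omega(k)}H^{d-1}}{k^{d-1}}\).
$$

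Finally I would sum these estimates. Since $(-1)^{\omega(k)+1}=-\mu(k)$ on squarefree $k$, the main terms yield $2^dH^d\bigl(1-\sum_{k\ge 1}\mu(k)\varphi(k)/k^{d+1}\bigr)$, and the Euler product $\sum_k\mu(k)\varphi(k)/k^{d+1}=\prod_p\(1-(p-1)/p^{d+1}\)$ identifies this quantity with $\vartheta_d 2^dH^d$; truncating the absolutely convergent series at $k\le H$ costs only $O\(2^dH^d\sum_{k>H}k^{-d}\)=O(H)$. For the error,
$$
\sum_{2\le k\le H}\frac{2^{\omega(k)}H^{d-1}}{k^{d-1}}\ll H^{d-1}\sum_{k\le H}\frac{2^{\omega(k)}}{k^{d-1}},
$$
and the Dirichlet series $\sum_k 2^{\omega(k)}/k^{s}$ converges for all $s>1$; thus for $d>2$ the inner sum is $O(1)$, giving the error term $O(H^{d-1})$, while for $d=2$ partial summation from $\sum_{n\le x}2^{\omega(n)}\ll x\log x$ gives $\sum_{k\le H}2^{\omega(k)}/k\ll(\log H)^2$, whence $O(H(\log H)^2)$. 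Combining the two contributions proves the theorem. The step I expect to be most delicate is the uniform error control just above: one must expand $(2\fl{H/k}+1)^{d-1}$ and keep track of the divisor-type error $2^{\omega(k)}$ arising from $\psi_k$ carefully enough to secure $\#\cA_k(H)-2^dH^d\varphi(k)/k^{d+1}\ll 2^{\omega(k)}H^{d-1}/k^{d-1}$ for every squarefree $k\le H$; this bound, together with the threshold behaviour of $\sum_{k\le H}2^{\omega(k)}/k^{d-1}$ at the exponent $d-1=1$, is precisely what produces the additional $(\log H)^2$ factor when $d=2$, whereas the inclusion--exclusion, the exact formula for $\#\cA_k(H)$, and the recognition of the Euler product are all routine.
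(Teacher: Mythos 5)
Your proposal is correct and follows essentially the same route as the paper: your sets $\cA_k(H)$ coincide with the paper's $\cG_d(k,H)$, the signed sum over squarefree $k$ is exactly the paper's $-\sum_{s=2}^{H}\mu(s)\,\#\cG_d(s,H)$, and the Euler-product identification and the treatment of $\sum_{k\le H}2^{\omega(k)}/k^{d-1}$ (with the $(\log H)^2$ loss at $d=2$) match the paper's argument step for step. The only difference is that you prove the counting estimate for $\#\cA_k(H)$ directly, whereas the paper imports it from Dubickas's Lemma~2; your derivation of that estimate is sound.
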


We remark that our argument is quite similar to that of Dubickas~\cite{Dub},
and in fact the method of~\cite{Dub} can also produce a bound on the error term
in an asymptotic formula for $E_d(H)$.
However we truncate the underlying inclusion-exclusion formula differently. 
This allows us to get a better bound on the error term than that
which follows from the approach of~\cite{Dub}.

Furthermore, we obtain an asymptotic formula for the cardinality $F_d(H) = \#\cF_d(H)$
of the set   $\cF_d(H)$ of Eisenstein polynomials
of the form~\eqref{eq:poly} of height at most $H$, that is, satisfying
$\max\{|a_{0}|,\ldots,|a_{d}|\}\leq H$. This result does not seem to have
any predecessors.

\begin{thm}
\label{thm:general}
We have,
$$
F_d(H)=\rho_d 2^{d+1} H^{d+1}+\left\{\begin{array}{ll}
O\(H^{d}\),&\quad \text{if $d>2$}, \\
O(H^2(\log H)^2),&
\quad  \text{if $d=2$},
\end{array}\right.
$$
where
$$
\rho_d =  1-\prod_{p~\mathrm{prime}}
\(1- \frac{(p-1)^2}{p^{d+2}}\).
$$
\end{thm}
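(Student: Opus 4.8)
The plan is to run an inclusion--exclusion over the primes witnessing the Eisenstein property, in the same spirit as the proof of Theorem~\ref{thm:main}, but now also tracking the leading coefficient $a_d$. For a squarefree integer $m\ge 1$ let $N_m(H)$ denote the number of $f\in\cF_d(H)$ that satisfy the Eisenstein criterion for \emph{every} prime $p\mid m$. The conditions imposed on distinct coefficients are independent, and for squarefree $m$ the requirement ``$p\mid a_0$ and $p^2\nmid a_0$ for all $p\mid m$'' is equivalent to ``$m\mid a_0$ and $\gcd(a_0/m,m)=1$''. Hence
$$
N_m(H)=A_m(H)\,B_m(H)^{d-1}\,C_m(H),
$$
where $A_m(H)=\#\{b\in\Z:|b|\le H/m,\ \gcd(b,m)=1\}$ counts the admissible $a_0=mb$, where $B_m(H)=\#\{a\in\Z:|a|\le H,\ m\mid a\}$ counts each of $a_1,\dots,a_{d-1}$, and where $C_m(H)=\#\{a\in\Z:|a|\le H,\ \gcd(a,m)=1\}$ counts $a_d$. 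Since $a_0$ must be a nonzero multiple of $m$, we have $N_m(H)=0$ once $m>H$, so the inclusion--exclusion is a finite sum:
$$
F_d(H)=-\sum_{2\le m\le H,\ \mu(m)^{2}=1}\mu(m)\,N_m(H).
$$

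Next I would substitute the elementary estimates $B_m(H)=2H/m+O(1)$ and, by M\"obius inversion over the divisors of $m$,
$$
A_m(H)=\frac{2H\varphi(m)}{m^{2}}+O\(2^{\omega(m)}\),\qquad
C_m(H)=\frac{2H\varphi(m)}{m}+O\(2^{\omega(m)}\),
$$
valid for $1\le m\le H$ (here $\omega(m)$ is the number of prime divisors of $m$). Multiplying out, the main term of $N_m(H)$ is $(2H)^{d+1}\varphi(m)^{2}/m^{d+2}$, and since $\varphi(m)^{2}/m^{d+2}=\prod_{p\mid m}(p-1)^{2}/p^{d+2}$ for squarefree $m$, summing this against $-\mu(m)$ and completing the range to all $m\ge1$ gives $\sum_{\mu(m)^{2}=1}\mu(m)\varphi(m)^{2}/m^{d+2}=\prod_p\(1-(p-1)^{2}/p^{d+2}\)$, hence the main term
$$
(2H)^{d+1}\(1-\prod_{p}\(1-\frac{(p-1)^{2}}{p^{d+2}}\)\)=\rho_d\,2^{d+1}H^{d+1}.
$$

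It remains to control the error, which has two sources. The first is the tail $(2H)^{d+1}\sum_{m>H}\mu(m)\varphi(m)^{2}/m^{d+2}$; bounding $\varphi(m)^{2}/m^{d+2}\le m^{-d}$ gives $O\(H^{d+1}\cdot H^{1-d}\)=O\(H^{2}\)$, which is absorbed into the claimed error for every $d\ge2$. The second is $\sum_{2\le m\le H,\ \mu(m)^{2}=1}\bigl|N_m(H)-(2H)^{d+1}\varphi(m)^{2}/m^{d+2}\bigr|$; telescoping $A_mB_m^{d-1}C_m$ and replacing one factor at a time by its main term, and using $A_m(H),B_m(H)=O(H/m)$ and $C_m(H)=O(H)$, each error contribution is, up to a constant, of the shape $2^{\omega(m)}H^{d}/m^{d-1}$, $H^{d}/m^{d-1}$ or $2^{\omega(m)}H^{d}/m^{d}$. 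For $d>2$ the three series $\sum_m 2^{\omega(m)}m^{-(d-1)}$, $\sum_m m^{-(d-1)}$ and $\sum_m 2^{\omega(m)}m^{-d}$ converge, giving $O\(H^{d}\)$; for $d=2$ the dominant piece is $H^{2}\sum_{m\le H}2^{\omega(m)}/m$, and the standard bounds $\sum_{m\le x}2^{\omega(m)}\ll x\log x$ and $\sum_{m\le x}2^{\omega(m)}/m\ll(\log x)^{2}$ yield $O\(H^{2}(\log H)^{2}\)$. Combining these gives the stated formula. The one genuinely delicate point, exactly as in Theorem~\ref{thm:main}, is the case $d=2$: the factor $2^{\omega(m)}$ coming from the coprimality counts for $a_0$ and $a_d$ only allows $\sum_{m\le H}2^{\omega(m)}/m\ll(\log H)^{2}$ rather than $\ll\log H$, and this is precisely what forces the $(\log H)^{2}$ in the error term; everything else is routine bookkeeping.
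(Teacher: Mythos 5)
Your proposal is correct and follows essentially the same route as the paper: the same inclusion--exclusion over squarefree moduli $m$, the same factorization of the count into independent coefficient counts (your $N_m(H)=A_m(H)B_m(H)^{d-1}C_m(H)$ is exactly the paper's $\#\cH_d(m,H)$ from Lemma~\ref{lem:HdsH}), and the same treatment of the main term, tail, and the $2^{\omega(m)}$ error sums, including the $(\log H)^2$ loss at $d=2$. No gaps.
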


\section{Notation}

As usual, for any integer $n\ge 1,$ let $\omega(n),\tau(n)$ and $\varphi(n)$ be the number of distinct prime factors, the number of divisors
and Euler function respectively (we also set $\omega(1) =0$).

We also use $\mu$ to denote the M{\" o}bius function, that is,
$$
\mu(n)= \begin{cases} (-1)^{\omega(n)} & \text{if } n \
\text{is square free}, \\
0 & \text{if } n \ \text{otherwise}.
\end{cases}
$$

Throughout the paper, the implied constants in the symbol `$O$' 
may occasionally,
where obvious, depend  on   the degree $d$.
We recall that the notation $U = O(V)$  is
equivalent to the assertion that the inequality $|U|\le c|V|$ holds for some
constant $c>0$. In addition to using $d$ to indicate the degree of a polynomial we retain the traditional use of $d,$ the divisor, as the index of summation in 
some well-known identities.

\section{Preparations}

We start by deriving a formula for the number of monic polynomials for which a given positive number satisfies conditions that are similar, but not equivalent, to the Eisenstein criterion.
Let $s$ be a positive integer. Let $\cG_d(s,H)$ be the set of monic polynomials~\eqref{eq:poly}
of height at most $H$ and
such that
\begin{enumerate}
\item[(i)] $s \mid a_i$ for $i=0, \ldots, d-1$,
\item[(ii)] $\gcd\(a_0/s,s\)=1$.
\end{enumerate}

It is easy to see that~\cite[Lemma~2]{Dub} immediately
implies the following result.

\begin{lem}
\label{lem:GdsH}
For $s \le H$, we have
$$
\# \cG_d(s,H)=\frac{2^dH^d\varphi(s)}{s^{d+1}}+
O\(\frac{H^{d-1} 2^{\omega(s)}}{s^{d-1}}\).
$$
\end{lem}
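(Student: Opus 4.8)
The plan is to count monic polynomials~\eqref{eq:poly} of height at most $H$ satisfying conditions (i) and (ii) directly, by first fixing the coefficients $a_1,\ldots,a_{d-1}$, then $a_0$.

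For each $i=1,\ldots,d-1$, condition (i) requires $s \mid a_i$ with $|a_i| \le H$; the number of such $a_i$ is $2\lfloor H/s\rfloor + 1 = 2H/s + O(1)$. Since $s \le H$, this is $(2H/s)(1 + O(s/H))$, and taking the product over the $d-1$ middle coefficients gives
$$
\prod_{i=1}^{d-1}\(\frac{2H}{s} + O(1)\) = \frac{(2H)^{d-1}}{s^{d-1}} + O\(\frac{H^{d-2}}{s^{d-2}}\),
$$
where the error term absorbs all cross terms (here I use $s \le H$ and that $d$ is fixed). For the coefficient $a_0$ I need $|a_0|\le H$, $s \mid a_0$, and $\gcd(a_0/s, s) = 1$; writing $a_0 = s b$ with $|b| \le H/s$, the count of valid $b$ is the number of integers in $[-H/s, H/s]$ coprime to $s$. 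A standard estimate (inclusion–exclusion over divisors of $s$, i.e. $\sum_{e \mid s}\mu(e)(2H/(se) + O(1))$) gives this count as $(2H/s)(\varphi(s)/s) + O(2^{\omega(s)})$, since $\sum_{e\mid s}|\mu(e)| = 2^{\omega(s)}$ and $\sum_{e\mid s}\mu(e)/e = \varphi(s)/s$.

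Multiplying the count for $a_0$ by the count for $(a_1,\ldots,a_{d-1})$ yields
$$
\#\cG_d(s,H) = \(\frac{2H\varphi(s)}{s^2} + O\(2^{\omega(s)}\)\)\(\frac{(2H)^{d-1}}{s^{d-1}} + O\(\frac{H^{d-2}}{s^{d-2}}\)\).
$$
Expanding, the main term is $2^d H^d \varphi(s)/s^{d+1}$. The dominant error comes from $O(2^{\omega(s)}) \cdot (2H)^{d-1}/s^{d-1} = O(H^{d-1} 2^{\omega(s)}/s^{d-1})$; the other cross terms ($\varphi(s) \le s$ and $2^{\omega(s)} \ge 1$) are of equal or smaller order, again using $s \le H$ so that $H^{d-2}/s^{d-2} \le H^{d-1}/s^{d-1}$ up to constants. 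This gives exactly the claimed formula. Alternatively, as the excerpt notes, one can simply invoke~\cite[Lemma~2]{Dub}, which counts monic polynomials with all coefficients in a prescribed union of residue classes and a coprimality condition on $a_0$; specialising its parameters to the single modulus $s$ and the single allowed residue $0 \bmod s$ for each of $a_0,\ldots,a_{d-1}$ produces the statement immediately. The only mild obstacle is bookkeeping the error terms to confirm that $O(H^{d-1}2^{\omega(s)}/s^{d-1})$ genuinely dominates all others under the hypothesis $s \le H$; this is routine since $d$ is fixed and $1 \le 2^{\omega(s)}$.
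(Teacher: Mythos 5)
Your proof is correct, and it is worth noting that it is more self-contained than the paper's: the paper does not prove Lemma~\ref{lem:GdsH} at all, but simply observes that it follows immediately from Lemma~2 of Dubickas~\cite{Dub} (the second route you mention in passing). Your direct argument --- counting the $d-1$ middle coefficients as $2\lfloor H/s\rfloor+1=2H/s+O(1)$ each, counting $a_0$ via inclusion--exclusion over the divisors of $s$ to get $2H\varphi(s)/s^2+O\bigl(2^{\omega(s)}\bigr)$, and multiplying --- is exactly the strategy the paper itself uses later to prove the non-monic analogue, Lemma~\ref{lem:HdsH} (where the count for $a_0$ is packaged as Lemma~\ref{lem:Erat} applied with $\lfloor H/s\rfloor$ in place of $H$). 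Your bookkeeping of the error terms is sound: the cross term $\bigl(2H\varphi(s)/s^2\bigr)\cdot O\bigl(H^{d-2}/s^{d-2}\bigr)$ is $O\bigl(H^{d-1}/s^{d-1}\bigr)$ by $\varphi(s)\le s$, the term $O\bigl(2^{\omega(s)}\bigr)\cdot(2H)^{d-1}/s^{d-1}$ gives the stated error, and the remaining product of error terms is smaller because $s\le H$; since $2^{\omega(s)}\ge 1$, everything is absorbed into $O\bigl(H^{d-1}2^{\omega(s)}/s^{d-1}\bigr)$. The only thing your write-up buys beyond the paper is independence from the external reference, at the cost of a page of routine computation the authors chose to outsource.
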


We  now derive a version of Lemma~\ref{lem:GdsH} for arbitrary polynomials.
 Let $\cH_d(s,H)$  be the set of polynomials~\eqref{eq:poly}
of height at most $H$ and
such that
\begin{enumerate}
\item[(i)] $s \mid a_i$ for $i=0, \ldots, d-1$,
\item[(ii)] $\gcd\(a_0/s,s\)=1$,
\item[(iii)] $\gcd(a_d,s)=1$.
\end{enumerate}

We also use the well-known  identity
\begin{equation}
\label{eq:muphi}
\sum_{d \mid s}\frac{\mu{(d)}}{d}=\frac{\varphi{(s)}}{s};
\end{equation}
see~\cite[Section~16.3]{HaWr}.

We now   define the following generalisation of the Euler function,
$$
\varphi(s,H) = \sum_{\substack{|a|\leq H \\ \gcd(a,s)=1}}1,
$$
and use the following well-known consequence of the sieve of
Eratosthenes.

\begin{lem}
\label{lem:Erat}
For any  integer $s \ge 1$, we have
$$
\varphi(s,H)
= \frac{2H\varphi (s) }{s} + O\(2^{\omega(s)}\).
$$
\end{lem}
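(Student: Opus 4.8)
The plan is to detect the coprimality condition $\gcd(a,s)=1$ by M\"obius inversion and thereby reduce the problem to counting integers in a symmetric interval that are divisible by a fixed modulus.

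First I would insert the identity $\sum_{e\mid\gcd(a,s)}\mu(e)$, which equals $1$ when $\gcd(a,s)=1$ and $0$ otherwise, and interchange the order of summation:
$$
\varphi(s,H)=\sum_{|a|\le H}\ \sum_{e\mid\gcd(a,s)}\mu(e)
=\sum_{e\mid s}\mu(e)\sum_{\substack{|a|\le H\\ e\mid a}}1 .
$$
This rearrangement is legitimate since every sum here is finite, and only the squarefree divisors $e$ of $s$ survive because $\mu(e)=0$ otherwise.

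Next, for each divisor $e\mid s$ the inner sum counts the multiples of $e$ in the interval $[-H,H]$, which is exactly $2\fl{H/e}+1=2H/e+O(1)$. Substituting this estimate and splitting off the main term gives
$$
\varphi(s,H)=2H\sum_{e\mid s}\frac{\mu(e)}{e}+O\!\left(\sum_{e\mid s}|\mu(e)|\right).
$$
I would then evaluate the main term with the identity~\eqref{eq:muphi}, obtaining $2H\varphi(s)/s$, and bound the error using the fact that $\sum_{e\mid s}|\mu(e)|$ is the number of squarefree divisors of $s$, hence equals $2^{\omega(s)}$. This yields the asserted formula.

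I do not expect a genuine obstacle here, since this is a textbook instance of the sieve of Eratosthenes. The one point that deserves a little care is the error term: one should check that the contribution of each of the $2^{\omega(s)}$ relevant divisors is $O(1)$ with an absolute implied constant, uniformly in $s$ and $H$, so that the total error is $O(2^{\omega(s)})$ as claimed (in particular with no hidden dependence on $H$).
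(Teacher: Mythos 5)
Your proposal is correct and follows essentially the same route as the paper: detecting the coprimality condition via $\sum_{e\mid\gcd(a,s)}\mu(e)$, interchanging sums, counting multiples of $e$ in $[-H,H]$ as $2\fl{H/e}+1$, and then invoking~\eqref{eq:muphi} for the main term and $\sum_{e\mid s}|\mu(e)|=2^{\omega(s)}$ for the error. Your closing remark about uniformity of the $O(1)$ per divisor is exactly the right point to check, and it holds since each term contributes at most $1$ in absolute value.
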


\begin{proof} Using  the inclusion-exclusion principle
we write
$$
\varphi(s,H)
= \sum_{d \mid s}\mu (d)\sum_{\substack{|a|\leq H\\ d \mid a}} 1
=  \sum_{d \mid s}\mu (d)\(2\fl{\frac{H}{d}}+1\).
$$
Therefore,
\begin{eqnarray*}
\varphi(s,H)
= \sum_{d \mid s}\mu (d)\(\frac{2H}{d} + O(1)\)
= 2H \sum_{d \mid s} \frac{\mu (d)}{d}  + O\( \sum_{d \mid s}  |\mu(d)|\).
\end{eqnarray*}
Recalling~\eqref{eq:muphi} and that
$$
\sum_{d \mid s}  |\mu(d)| = 2^{\omega(s)},
$$
see~\cite[Theorem ~264]{HaWr},
we obtain the desired result.
\end{proof}

We also recall that
\begin{equation}
\label{eq:omega}
2^{\omega(s)} \le \tau(s) = s^{o(1)}
\end{equation}
as $s\to \infty$, see~\cite[Theorem~317]{HaWr}. 

Next we obtain an asymptotic formula for $\#\cH_d(s,H)$.

\begin{lem}
\label{lem:HdsH}
For $s \le H$, we have
$$
\#\cH_d(s,H)=\frac{2^{d+1}H^{d+1}\varphi^2(s)}{s^{d+2}}+
O\( \frac{H^{d }}{s^{d -1}} 2^{\omega(s)}\). $$
\end{lem}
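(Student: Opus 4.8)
The plan is to reduce the count $\#\cH_d(s,H)$ to the count $\#\cG_d(s,H)$ already handled by Lemma~\ref{lem:GdsH}, times a factor coming from the additional leading-coefficient condition $\gcd(a_d,s)=1$. First I would observe that the conditions (i) and (ii) defining $\cH_d(s,H)$ involve only $a_0,\ldots,a_{d-1}$, while condition (iii) involves only $a_d$, and these two blocks of variables are independent. So I would split the count as a product: the number of choices of $(a_0,\ldots,a_{d-1})$ with $|a_i|\le H$ satisfying (i) and (ii) — which is exactly $\#\cG_d(s,H)$ if we momentarily forget that $\cG_d$ insists on $a_d=1$, i.e.\ it is the count of admissible tails — multiplied by the number of $a_d$ with $|a_d|\le H$ and $\gcd(a_d,s)=1$, which is precisely $\varphi(s,H)$ in the notation introduced just before Lemma~\ref{lem:Erat}.

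Next I would feed in the two estimates already available. From (the tail-count version of) Lemma~\ref{lem:GdsH} the number of admissible $(a_0,\dots,a_{d-1})$ is
$$
\frac{2^dH^d\varphi(s)}{s^{d+1}}+O\(\frac{H^{d-1}2^{\omega(s)}}{s^{d-1}}\),
$$
and from Lemma~\ref{lem:Erat},
$$
\varphi(s,H)=\frac{2H\varphi(s)}{s}+O\(2^{\omega(s)}\).
$$
Multiplying these two asymptotic formulas gives a main term
$$
\frac{2^dH^d\varphi(s)}{s^{d+1}}\cdot\frac{2H\varphi(s)}{s}
=\frac{2^{d+1}H^{d+1}\varphi^2(s)}{s^{d+2}},
$$
which is exactly the claimed main term, plus three cross/error terms to be bounded.

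The only real work is checking that each of the three error contributions is $O\(H^{d}2^{\omega(s)}/s^{d-1}\)$. The cross term $\dfrac{2^dH^d\varphi(s)}{s^{d+1}}\cdot O\(2^{\omega(s)}\)$ is $O\(H^d 2^{\omega(s)}/s\)$ since $\varphi(s)\le s$; the cross term $O\(\dfrac{H^{d-1}2^{\omega(s)}}{s^{d-1}}\)\cdot\dfrac{2H\varphi(s)}{s}$ is $O\(H^d 2^{\omega(s)}/s^{d-1}\)$, again using $\varphi(s)\le s$; and the product of the two error terms is $O\(H^{d-1}4^{\omega(s)}/s^{d-1}\)$, which is absorbed into $O\(H^d 2^{\omega(s)}/s^{d-1}\)$ for $s\le H$ because then $H^{d-1}\le H^d/s\cdot s$, more simply because $2^{\omega(s)}\le s$ and $H^{d-1}\le H^{d}$. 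All three are dominated by $O\(H^{d}2^{\omega(s)}/s^{d-1}\)$, so I do not expect any genuine obstacle here — the one point to state carefully is the independence/factorisation in the first step and the fact that the count of tails is literally $\#\cG_d(s,H)$, so that Lemma~\ref{lem:GdsH} applies verbatim.
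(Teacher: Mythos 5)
Your proposal is correct and essentially matches the paper's proof: the paper likewise counts each coefficient block independently (getting $2\lfloor H/s\rfloor+1$ choices for each of $a_1,\dots,a_{d-1}$, $\varphi(s,\lfloor H/s\rfloor)$ for $a_0$, and $\varphi(s,H)$ for $a_d$) and multiplies the three estimates together, your only streamlining being to quote Lemma~\ref{lem:GdsH} as a black box for the whole tail $(a_0,\dots,a_{d-1})$ rather than re-deriving those counts. One harmless slip: the first cross term $\frac{2^dH^d\varphi(s)}{s^{d+1}}\cdot O(2^{\omega(s)})$ is $O(H^d2^{\omega(s)}/s^{d})$ after using $\varphi(s)\le s$, not $O(H^d2^{\omega(s)}/s)$ as you wrote --- your stated bound would not be absorbed into $O(H^d2^{\omega(s)}/s^{d-1})$ when $d>2$, but the correctly computed term is even smaller than needed, so the conclusion stands.
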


\begin{proof}Fix a $d>1$.
For every $i=1, \ldots, d-1$, the number of admissible values of $a_{i}$
(that is, with $|a_i| \le H$ and $s \mid a_i$) is equal to
\begin{equation}
\label{eq:Coeff 1 d-1}
2\left\lfloor\frac{H}{s}\right\rfloor+1=\frac{2H}{s}+O(1).
\end{equation}

We now consider the admissible values of $a_0$ .
Writing $a_0 = sm$ with an integer $m$ satisfying $|m|\le H/s$ and $\gcd(m,s)=1$
we see from Lemma~\ref{lem:Erat}
that $a_0$   takes
\begin{equation}
\label{eq:Coeff 0}
\varphi(s, \fl{H/s}) =
\frac{2H\varphi{(s)}}{s^2}+O\(2^{\omega(s)}\)
\end{equation}
distinct values.

Lemma~\ref{lem:Erat} also implies that $a_d$   takes
\begin{equation}
\label{eq:Coeff d}
\varphi(s, H) =
\frac{2H\varphi{(s)}}{s}+O\(2^{\omega(s)}\)
\end{equation}
distinct values.

Combining~\eqref{eq:Coeff 1 d-1}, \eqref{eq:Coeff 0}
and~\eqref{eq:Coeff d} we obtain
\begin{equation}
\label{eq:prelim}
\begin{split}
\#\cH_d(s,H) & = \( \frac{2H}{s}
+O(1)\)^{d-1}
\(\frac{2H\varphi{(s)}}{s^2}+O\(2^{\omega(s)}\)\) \\
& \qquad\qquad\qquad\qquad\qquad \qquad\qquad
 \(\frac{2H\varphi(s)}{s}+O\(2^{\omega(s)}\)\) \\
 & = \( \(\frac{2H}{s}\)^{d-1}
+O\( \(\frac{H}{s}\)^{d-2}\) \)
\(\frac{2H\varphi{(s)}}{s^2}+O\(2^{\omega(s)}\)\) \\
& \qquad\qquad\qquad\qquad\qquad \qquad\qquad
 \(\frac{2H\varphi(s)}{s}+O\(2^{\omega(s)}\)\).
\end{split}
\end{equation}
Hence, using the trivial bound $\varphi(s) \le s$ and
that by~\eqref{eq:omega} we have $2^{\omega(s)} = O( H)$,
 we see that
\begin{equation*}
\begin{split}
\(\frac{2H\varphi{(s)}}{s^2}+O\(2^{\omega(s)}\)\)
 \(\frac{2H\varphi(s)}{s}+O\(2^{\omega(s)}\)\)
 = \frac{4H^2\varphi^2(s)}{s^3} + O\(H2^{\omega(s)}\).
\end{split}
\end{equation*}
Substituting into ~\eqref{eq:prelim}, and using that $\varphi(s) \le s$ again,
we obtain
$$
\#\cH_d(s,H)=\frac{2^{d+1}H^{d+1}\varphi^2(s)}{s^{d+2}}+  O\( \frac{H^{d}}{s^{d-1}} +  \frac{H^{d-1}}{s^{d-2}} 2^{\omega(s)}
+ \frac{H^{d }}{s^{d -1}} 2^{\omega(s)}\) .
$$
Taking into account that $s \le H$, we conclude  the proof.
\end{proof}

\section{Proof of Theorem~\ref{thm:main}}

We now prove the main result for monic Eisenstein polynomials.

The inclusion-exclusion principle implies that
$$
E_d(H) =-\sum_{s=2}^H\mu(s)\:\# \cG_d(s,H).
$$
Substituting the asymptotic formula of Lemma~\ref{lem:GdsH} for $\# \cG_d(s,H)$,  yields
\begin{equation}
\label{eq:Asymp}
\begin{split}
E_d(H)&=-\sum_{s=2}^H\mu(s)\left(\frac{2^dH^d\varphi(s)}{s^{d+1}}\right)+
O\(\sum_{s=2}^H \(\frac{H}{s}\)^{d-1}2^{\omega(s)}\)\\
&=-2^dH^d\sum_{s=2}^\infty\frac{\mu(s)\varphi(s)}{s^{d+1}}
+O\( H^d\sum_{s=H+1}^\infty\frac{\varphi(s)}{s^{d+1}} +H^{d-1}\sum_{s=2}^H
\frac{2^{\omega(s)}}{s^{d-1}}\)
\end{split}
\end{equation}
(since $\varphi(s)\le s$, the series in the main term  converges absolutely for $d \ge 2$).
Furthermore, since $\mu(s)\varphi(s)/s^{d+1}$ is a multiplicative function,
it follows that
\begin{equation}
\label{eq:Main}
\begin{split}
 -\sum_{s=2}^\infty\frac{\mu(s)\varphi(s)}{s^{d+1}} =
1 -&\sum_{s=1}^\infty\frac{\mu(s)\varphi(s)}{s^{d+1}}\\
= 1-\prod_{p~\mathrm{prime}}&
\left(1-\frac{\varphi(p)}{p^{d+1}}\right)  =  1-\prod_{p~\mathrm{prime}}
\left(1-\frac{p-1}{p^{d+1}}\right) .
\end{split}
\end{equation}
We also have
\begin{equation}
\label{eq:Err1}
\sum_{s=H+1}^\infty\frac{\varphi(s)}{s^{d+1}}
\le \sum_{s=H+1}^\infty\frac{1}{s^{d}} = O\(H^{-d+1}\).
\end{equation}

Recalling~\eqref{eq:omega},  for $d > 2$ we immediately obtain
\begin{equation}
\label{eq:Err2 1}
\begin{split}
\sum_{s=2}^H \frac{2^{\omega(s)}}{s^{d-1}} =  O(1).
\end{split}
\end{equation}

For $d=2$ we recall that
$$
\sum_{s \le t} 2^{\omega(s)} \le \sum_{s \le t}  \tau(s)
= (1 + o(1)) t \log t
$$
as $t \to \infty$, see~\cite[Theorem~320]{HaWr}.

Thus, via partial summation, we derive
\begin{equation}
\label{eq:Err2 2}
\begin{split}
\sum_{s=2}^H \frac{2^{\omega(s)}}{s} =  O \(\sum_{t=2}^H \frac{\log t}{t} \)
= O\((\log H)^2\).
\end{split}
\end{equation}
Substituting~\eqref{eq:Main}, \eqref{eq:Err1}, \eqref{eq:Err2 1} and~\eqref{eq:Err2 2}
in~\eqref{eq:Asymp}, we conclude the proof.

\section{Proof of Theorem~\ref{thm:general}}

The inclusion exclusion principle implies that
\begin{align*}
\#\cF_d(H)&= -\sum_{s=2}^H \mu(s)\#\cH_d(s,H).
\end{align*}
Using the asymptotic formula of  Lemma~\ref{lem:HdsH} yields
\begin{equation}
\label{eq:Asympg}
\begin{split}
\#\cF_d(H)&=-\sum_{s=2}^H \mu(s)\left(\frac{2^{d+1}H^{d+1}\varphi^2(s)}{s^{d+2}}\right)+O\left(\sum_{s=2}^H\frac{H^d\,2^{\omega(s)}}{s^{d-1}}\right)\\
&=-2^{d+1}H^{d+1}\sum_{s=2}^\infty\frac{\mu(s)\varphi^2(s)}{s^{d+2}}\\
&\qquad \qquad \qquad +O\left(H^{d+1}\sum_{s=H+1}^\infty \frac{\varphi^2(s)}{s^{d+2}}+H^d\sum_{s=2}^H \frac{2^{\omega(s)}}{s^{d-1}}\right)
\end{split}
\end{equation}
(since $\varphi(s) \leq s$, the series in the main term converges absolutely for $d \geq 2).$
In a similar manner to that used for \eqref{eq:Main}, we note that $\mu(s)\varphi^2(s)/s^{d+2}$ is a multiplicative function, so it follows that
\begin{equation}
\label{eq:general}
\begin{split}
 -\sum_{s=2}^\infty\frac{\mu(s)\varphi^2(s)}{s^{d+2}} =
1 -&\sum_{s=1}^\infty\frac{\mu(s)\varphi^2(s)}{s^{d+2}}\\
= 1-\prod_{p~\mathrm{prime}}&
\left(1-\frac{\varphi^2(p)}{p^{d+2}}\right)  =  1-\prod_{p~\mathrm{prime}}
\left(1-\frac{(p-1)^2}{p^{d+2}}\right) .
\end{split}
\end{equation}
Since $\varphi(s) \leq s,$ we also have
\begin{equation}
\label{eq:Err2g}
\sum_{s=H+1}^\infty\frac{\varphi^2(s)}{s^{d+2}}
\le \sum_{s=H+1}^\infty\frac{1}{s^{d}} = O\(H^{-d+1}\).
\end{equation}
Substituting~\eqref{eq:general}, \eqref{eq:Err2g},
in~\eqref{eq:Asympg}, and recalling~\eqref{eq:Err2 1} and~\eqref{eq:Err2 2},
we conclude the proof.
%
%
%
%
%
%
%

\section{Further Comments on $\vartheta_d$ and $\rho_d$}

Clearly, as $d\to \infty$,
\begin{align*}
 \vartheta_d&= 1-\prod_{p~\mathrm{prime}}\(1- \frac{p-1}{p^{d+1}}\)=\sum_{s=2}^\infty\frac{\mu(s)\varphi(s)}{s^{d+1}}\\
 &=\frac{1}{2^{d+1}} - \frac{2}{3^{d+1}} +\sum_{s=4}^\infty\frac{\mu(s)\varphi(s)}{s^{d}}=
 \frac{1}{2^{d+1}} - \frac{2}{3^{d+1}} + O\(\int_{3}^\infty\frac{1}{\sigma^{d-1}}d\sigma\)\\
&=\frac{1}{2^{d+1}} - \frac{2}{3^{d+1}} +  O\(\frac{1}{d3^{d}}\)
= \frac{1}{2^{d+1}}   +  O\(\frac{1}{3^{d}}\).
\end{align*}
Similarly,
$$
\rho_d = \frac{1}{2^{d+2}}   +  O\(\frac{1}{3^{d}}\), \qquad d\to \infty.
$$

We have computed  in Table~\ref{tab:Table} the approximate values of $\vartheta_d$ and $\rho_d$ for $d =2,\ldots, 10$.
The first 10,000 primes have been used in the calculations. The values of
$\vartheta_d$ are consistent with those given in~\cite{Dub}, but the table of the values of $\rho_d$ seems
to be new.

\begin{table}
\begin{center}
    \caption{Approximate values of $\vartheta_d$ and $\rho_d$ for $d =2,\ldots, 10$.}  \label{tab:Table}
   \medskip

    \begin{tabular}{ | l | l | l |}
    \hline
    \textrm{$d$} & $\vartheta_d$ & $\rho_d$  \\ \hline
    $2$ & 0.2515 &0.1677\\ \hline

    $3$ & 0.0953 & 0.0556 \\ \hline

    $4$&0.0409& 0.0224  \\ \hline

    $5$&0.0186& 0.0099 \\ \hline
   $6$& 0.0088 & 0.0046 \\ \hline
    $7$&0.0042 & 0.0022 \\ \hline
    $8$&0.0021&0.0010 \\ \hline
    $9$&0.0010 & 0.0005 \\ \hline
    $10$&0.0005&0.0003 \\ \hline
    \end{tabular}
  \end{center}
 \end{table}

\end{document}